\def\R{\mathbb R}
\def\N{\mathbb N}
\def\Z{\mathbb Z}
\def\M{\mathsf M}      % Just M for generic maximal operator
\def\Ms{\mathsf M _{\mathsf S} ^*} % Ergodic strong maximal operator
\def\MHL{ \mathsf M _{\mathsf {HL}}}  % Hardy-Littlewood maximal operator
\def\MHLC{ \mathsf M _{\mathsf {HL, c}}} %centered Hardy-Littlewood maximal operator
\def\Mstr{\mathsf M _{\mathsf S}}  % Strong maximal operator
\def\MHLCE{\mathsf M^\ast_{\mathsf {HL,c}}} %ergodic centered HL maximal operator
\def\MHLE{\mathsf M^\ast_{\mathsf {HL}}} %ergodic HL maximal operator
\def\MHLD{\tilde{\mathsf M}_{\mathsf {HL}}}%discrete  HL maximal operator
\def\MHLCD{\tilde{\mathsf M}_{\mathsf {HL,c}}}%discrete centered HL maximal operator
\def\Cs{\mathsf C^* _{\mathsf S}}  % Ergodic sharp tauberian constant corresponding to ergodic strong maximal operator
\def\CSD{\tilde{\mathsf C}_{\mathsf S}} %discrete strong tauberian constant
\def\Cstr{\mathsf C _{\mathsf{S}}} %  strong sharp tauberian constant
\def\CHL{\mathsf C_{\mathsf{HL}}} % Hardy-Littlewood sharp tauberian constant
\def\CHLE{\mathsf C_{\mathsf{HL} } ^\ast} % Ergodic Hardy-Littlewood sharp tauberian constant
\def\CHLC{\mathsf C_{\mathsf{HL, c}}}%centered Hardy-Littlewood sharp tauberian constant
\def\CHLCE{\mathsf C_{\mathsf{HL, c} } ^\ast}%centered Hardy-Littlewood sharp tauberian constant
\def\CHLCD{\tilde{\mathsf C}_{\mathsf{HL, c}}}%discrete centered Hardy-Littlewood sharp tauberian constant
\def\CHLD{\tilde{\mathsf C}_{\mathsf{HL}}} %discrete HL sharp tauberian constant
\def\DSM{\tilde{\mathsf M}_{\mathsf S}} %discrete strong maximal operator
\def\CEB{\mathsf C^* _{\mathcal{B}}(\alpha)} %ergodic tauberian constant associated to general basis
\def\MEB{\mathsf M^\ast _{\mathcal{B}}}%ergodic maximal operator associated to general basis
\def\MB{\mathsf M_{\mathcal{B}}}% maximal operator associated to general basis
\def\MDB{\tilde{\mathsf M}_\mathcal{B}} %maximal operator associated to discrete basis
\def\CB{\mathsf C_{\mathcal{B}}(\alpha)}% tauberian constant associated to general basis
\def\CDB{ \tilde{\mathsf C}_{\mathcal{B}}(\alpha)}
\def\norm#1.#2.{\lVert#1\rVert_{#2}}
\def\Norm#1.#2.{\bigl\lVert#1\bigr\rVert_{#2}}
\def\NOrm#1.#2.{\Bigl\lVert#1\Bigr\rVert_{#2}}
\def\NORm#1.#2.{\biggl\lVert#1\biggr\rVert_{#2}}
\def\NORM#1.#2.{\Biggl\lVert#1\Biggr\rVert_{#2}}
\def\ip#1,#2,{\langle #1,#2\rangle}
\def\Ip#1,#2,{\bigl\langle#1,#2\bigr\rangle}
\def\IP#1,#2,{\Bigl\langle#1,#2\Bigr\rangle}
\def\Abs#1{\bigl\lvert#1\bigr\rvert}
\def\ABs#1{\Bigl\lvert#1\Bigr\rvert}
\theoremstyle{plain}
\newtheorem{theorem}[equation]{Theorem}
\theoremstyle{definition}
\newtheorem{definition}[equation]{Definition}
\newtheorem{remark}[equation]{Remark}
\newtheorem{problem}[equation]{Problem}
\numberwithin{equation}{section}
\title[Solyanik estimates in ergodic theory]{Solyanik estimates in ergodic theory}
\author{Paul Hagelstein}
\address{P.H.: Department of Mathematics, Baylor University, Waco, Texas 76798}
\email{\href{mailto:paul_hagelstein@baylor.edu}{paul\!\hspace{.018in}\_\,hagelstein@baylor.edu}}
\thanks{P. H. is partially supported by a grant from the Simons Foundation (\#208831 to Paul Hagelstein).}
\author{Ioannis Parissis}
\address{I.P.: Departamento de Matem\'aticas, Universidad del Pais Vasco, Aptdo. 644, 48080
Bilbao, Spain and Ikerbasque, Basque Foundation for Science, Bilbao, Spain}
\email{\href{mailto:ioannis.parissis@ehu.es}{ioannis.parissis@ehu.es}}
\thanks{I. P. is supported by IKERBASQUE.}
\subjclass[2010]{Primary 42B15, Secondary: 42B25}
\keywords{Halo function, maximal function, Tauberian conditions, ergodic theory}
\begin{document}
	
%%%%%%%%%%%%%%%%%%%%%%%%%%%%%% SECTION SECTION SECTION	
\begin{abstract}
Let $U_1, \ldots, U_n$ be a collection of commuting measure preserving transformations on a probability space $(\Omega, \Sigma, \mu)$.   Associated with these measure preserving transformations is the ergodic strong maximal operator $\Ms$ given by
\[
\Ms  f(\omega) \coloneqq \sup_{0 \in R  \subset  \mathbb{R}^n}\frac{1}{\#(R \cap \mathbb{Z}^n)}\sum_{(j_1, \ldots, j_n) \in R\cap \mathbb{Z}^n}\big|f(U_1^{j_1}\cdots U_n^{j_n}\omega)\big|,
\]
where the supremum is taken over all open rectangles in $\mathbb{R}^n$ containing the origin whose sides are parallel to the coordinate axes. For $0 < \alpha < 1$ we define the \emph{sharp Tauberian constant of $\Ms$ with respect to $\alpha$} by
\[
\Cs (\alpha) \coloneqq \sup_{\substack{E \subset \Omega \\ \mu(E) > 0}}\frac{1}{\mu(E)}\mu(\{\omega \in \Omega : \Ms \chi_E (\omega) > \alpha\}).
\]
Motivated by previous work of A. A. Solyanik and the authors regarding Solyanik estimates for the geometric strong maximal operator in harmonic analysis, we show that the Solyanik estimate
\[
\lim_{\alpha \rightarrow 1}\Cs(\alpha) = 1
\]
holds, and that in particular we have
\[\Cs(\alpha) - 1 \lesssim_n (1 - \frac{1}{\alpha})^{1/n}\]
provided that $\alpha$ is sufficiently close to $1$. Solyanik estimates for centered and uncentered ergodic Hardy-Littlewood maximal operators associated with $U_1, \ldots, U_n$ are shown to hold as well. Further directions for research in the field of ergodic Solyanik estimates are also discussed.
\end{abstract}

\maketitle

%%%%%%%%%%%%%%%%%%%%%%%%%%%%%% SECTION SECTION SECTION
\section{Introduction}\label{intro}
This paper is intended to be an introduction of the topic of \emph{Solyanik estimates} to the field of ergodic theory. Solyanik estimates first emerged in the field of harmonic analysis in the mid 1990's with the work of A. A. Solyanik \cite{Solyanik} regarding fine properties of the restricted weak type distribution functions of the Hardy-Littlewood and strong maximal functions.  Recall that the uncentered Hardy-Littlewood maximal operator $\MHL$ is defined on functions $f \in L^{1}_{loc}(\mathbb{R}^n)$ by
\[
\MHL  f(x) \coloneqq \sup_{x \in B \subset \R^n}\frac{1}{|B|}\int_B |f|,
\]
where the supremum is taken over the set of all balls $B$ in $\mathbb{R}^n$ containing $x$.  The closely related \emph{centered} Hardy-Littlewood maximal operator $\MHLC$ is defined by
\[
\MHLC  f(x) \coloneqq \sup_{r>0} \frac{1}{|B(x,r)|}\int_{B(x,r)} |f(y)|dy
\]
where the supremum is taken over the set of all balls $B(x,r)$ in $\mathbb{R}^n$ that are centered at $x$ and $f$ is a locally integrable function on $\R^n$. The strong maximal operator $\Mstr $ is defined on   locally integrable functions on $\mathbb{R}^n$ by
\[
\Mstr f(x) \coloneqq \sup_{x \in R \subset \mathbb{R}^n}\frac{1}{|R|}\int_R |f|,
\]
where the supremum is taken over the set of rectangles in $\mathbb{R}^n$ containing $x$ whose sides are parallel to the coordinate axes.

The Hardy-Littlewood maximal operator is relatively easily seen to satisfy the weak type estimate
\[
|\{x \in \mathbb{R} : \MHL f (x) > \alpha\}| \leq \frac{3^n}{\alpha}\|f\|_{L^1(\R^n)};
\]
the sharp weak type estimate for $\MHL$ acting on functions on $\mathbb{R}$ may be improved, as is shown by Grafakos and Montgomery-Smith in \cite{grafakosmontsmith1997}. From this the restricted weak type estimate
\[
|\{x \in \mathbb{R} : \MHL\chi_E(x) > \alpha\}| \leq \frac{3^n}{\alpha}|E|
\]
immediately follows.  The centered Hardy-Littlewood maximal operator $\MHLC$ satisfies a similar restricted weak type estimate. Now, these estimates hold for $\MHL$ and $\MHLC$ for all $0<\alpha<1$ but it is reasonable to expect that the quantity $3^n/\alpha$ may be replaced by a value arbitrarily close to $1$ provided that we only consider values of $\alpha$ sufficiently near $1$. This expectation is validated by results collectively due to the authors and Solyanik. In order to state these results in a precise manner we introduce the following definitions.

%%%%%%%%%%%%%%%%%%%%%%%%%%%%%% DEFINITION DEFINITION DEFINITION
\begin{definition}  Let $\MHLC$ denote the centered Hardy-Littlewood maximal operator  on $\mathbb{R}^n$. The \emph{sharp Tauberian constant} of $\MHLC $ with respect to $\alpha\in(0,1)$ is defined as
\[
\CHLC (\alpha) \coloneqq \sup_{\substack{E \subset \mathbb{R}\\ 0 < |E| < \infty}}\frac{1}{|E|}|\{x \in \mathbb{R} : \MHLC \chi_E(x) > \alpha\}|.
\]
Similarly, let $\MHL$ denote the uncentered Hardy-Littlewood maximal operator  on $\mathbb{R}^n$. The sharp Tauberian constant of $\MHL $ with respect to $\alpha\in(0,1)$ is defined as
\[
\CHL (\alpha) \coloneqq \sup_{\substack{E \subset \mathbb{R}\\ 0 < |E| < \infty}}\frac{1}{|E|}|\{x \in \mathbb{R} : \MHL \chi_E(x) > \alpha\}|.
\]	
Finally, let $\Mstr$ denote the geometric strong maximal operator on $\R^n$. The sharp Tauberian constant of $\Mstr$ with respect to $\alpha$ is defined as
\[
\Cstr (\alpha) \coloneqq \sup_{\substack{E \subset \mathbb{R}^n \\ 0 < |E| < \infty}}\frac{1}{|E|}|\{x \in \mathbb{R}^n : \Mstr\chi_E(x) > \alpha\}|.
\]
\end{definition}
%%%%%%%%%%%%%%%%%%%%%%%%%%%%%% DEFINITION DEFINITION DEFINITION

The following theorem provides asymptotic estimates as $\alpha\to 1^-$ for the sharp Tauberian constants of the geometric maximal operators defined above.

%%%%%%%%%%%%%%%%%%%%%%%%%%%%%% THEOREM THEOREM THEOREM
\begin{theorem}[Hagelstein, Parissis, \cite{HP}, Solyanik \cite{Solyanik}]\label{solyanikthm} We have the following Solyanik estimates, for the centered, and uncentered sharp Tauberian constants.
\begin{enumerate}[leftmargin=3em,labelindent=-2pt]
\item[(i)] Let $\CHLC (\alpha)$ denote the sharp Tauberian constant of $\MHLC $ with respect to $\alpha$. Then $\lim_{\alpha \rightarrow 1^-}\CHLC (\alpha) = 1$.
In particular we have that
\[
\CHLC (\alpha) - 1 \lesssim_n  \frac{1}{\alpha} - 1
\]
for $\alpha$ sufficiently close to $1$.

\item[(ii)] Let $\CHL (\alpha)$ denote the sharp Tauberian constant of $\MHL $ with respect to $\alpha$. Then   $\lim_{\alpha \rightarrow 1^-}\CHL (\alpha) = 1$.  In particular we have that
\[
\CHL (\alpha) - 1 \lesssim_n \left(\frac{1}{\alpha} - 1\right)^{\frac{1}{n+1}}
\]
for $\alpha$ sufficiently close to 1.
\end{enumerate}
\end{theorem}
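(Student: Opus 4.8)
The plan is to reduce both parts to an estimate of the shape $|\Omega_\alpha\setminus E|\lesssim_n(\text{rate})\,|E|$, where $\Omega_\alpha$ denotes the relevant superlevel set. The starting observation is that for a.e.\ $x\in E$ the Lebesgue density theorem gives $\MHLC\chi_E(x)=\MHL\chi_E(x)=1>\alpha$, so $|E\setminus\Omega_\alpha|=0$ and hence $|\Omega_\alpha|=|E|+|\Omega_\alpha\setminus E|$; moreover $\Omega_\alpha$ is open (the maximal function of $\chi_E$ is lower semicontinuous) and of finite measure (e.g.\ by the restricted weak type bound $|\Omega_\alpha|\le\frac{3^n}{\alpha}|E|$ recalled above). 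Thus it suffices to bound $|\Omega_\alpha\setminus E|$, and the limits $\lim_{\alpha\to1^-}\CHLC(\alpha)=\lim_{\alpha\to1^-}\CHL(\alpha)=1$ follow at once from the quantitative bounds.

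\emph{The centered case (i).} For each $x\in\Omega_\alpha:=\{\MHLC\chi_E>\alpha\}$ fix a ball $B_x=B(x,r_x)$ with $|B_x\setminus E|<(1-\alpha)|B_x|$; since $|B_x|<|E|/\alpha$ the radii $r_x$ are uniformly bounded, so the Besicovitch covering theorem furnishes a countable subfamily $\{B_i\}$ that covers $\Omega_\alpha$ and has bounded overlap, $\sum_i\chi_{B_i}\le L_n$ for a dimensional constant $L_n$. Using $|B_i|<\frac1\alpha|B_i\cap E|$ and then the overlap bound against $E$,
\[
|\Omega_\alpha\setminus E|\le\sum_i|B_i\setminus E|<(1-\alpha)\sum_i|B_i|<\frac{1-\alpha}{\alpha}\sum_i|B_i\cap E|\le\frac{L_n(1-\alpha)}{\alpha}\,|E|.
\]
Hence $\CHLC(\alpha)-1\le L_n\bigl(\frac1\alpha-1\bigr)$, valid for every $\alpha\in(0,1)$, which is part (i).

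\emph{The uncentered case (ii).} Here a ball realizing density $>\alpha$ at $x$ need not be centered at $x$, so Besicovitch does not apply directly, and its naive replacement --- a Vitali selection of disjoint good balls whose $5$-dilates cover $\{\MHL\chi_E>\alpha\}$ --- loses a factor $5^n$ that does not tend to $1$ as $\alpha\to1$. The plan is to re-center at a controlled cost in level. Fix a parameter $t\in(0,1)$. If $B(z,r)$ is good and $y\in B(z,(1-t)r)$, then $B(y,tr)\subset B(z,r)$, whence $|B(y,tr)\setminus E|\le|B(z,r)\setminus E|<(1-\alpha)t^{-n}|B(y,tr)|$; consequently the union $G$ of the cores $B(z,(1-t)r)$ over all good balls satisfies $G\subseteq\{\MHLC\chi_E>\beta\}$ with $1-\beta=(1-\alpha)t^{-n}$, and part (i) yields $|G\setminus E|\le L_n\frac{1-\beta}{\beta}|E|\lesssim_n(1-\alpha)t^{-n}|E|$ provided $t^n>1-\alpha$. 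What remains is the leftover set $\{\MHL\chi_E>\alpha\}\setminus G$, each point of which lies outside the $(1-t)$-core of \emph{every} good ball containing it, i.e.\ within the thin outer shell $B(z,r)\setminus B(z,(1-t)r)$ of each such ball.

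Bounding this leftover set is the heart of the argument and the step I expect to be the main obstacle: a crude covering is useless (a single shell can carry a full fraction of its ball, and dilation reintroduces a non-vanishing constant), so one must exploit that a leftover point sits just outside the slightly shrunken ball $B(z,(1-t)r)$, which is itself good at level $1-(1-\alpha)(1-t)^{-n}$, so that only a dilation by $\frac{1}{1-t}=1+O(t)$ is needed, to be played off against this small drop in level. One expects in this way a bound $|\{\MHL\chi_E>\alpha\}\setminus G|\lesssim_n t\,|E|$, to be added to the loss $(1-\alpha)t^{-n}|E|$ already charged to $G$. Optimizing $t$ so that the two losses balance, $t\sim(1-\alpha)t^{-n}$, i.e.\ $t\sim(1-\alpha)^{1/(n+1)}$ (which satisfies $t^n>1-\alpha$ for $\alpha$ near $1$), then gives
\[
\CHL(\alpha)-1\lesssim_n(1-\alpha)^{1/(n+1)}\sim\Bigl(\tfrac1\alpha-1\Bigr)^{1/(n+1)}
\]
for $\alpha$ sufficiently close to $1$, the exponent $\tfrac1{n+1}$ being precisely the price of re-centering.
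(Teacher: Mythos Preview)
First, a framing remark: the present paper does not itself prove this theorem; it is quoted from \cite{HP} and \cite{Solyanik} and then used as a black box in the proofs of Theorems~\ref{t1}--\ref{t3}. So there is no in-paper proof to compare against, and your attempt has to be judged on its own. Your argument for part~(i) is complete and correct; it is exactly the Besicovitch-plus-bounded-overlap computation that the paper itself runs (in the discrete setting) in the proof of Theorem~\ref{t2}.

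Part~(ii), however, has a genuine gap. You correctly isolate the crux --- bounding the leftover set $\Omega_\alpha\setminus G$ of points that lie only in the outer $t$-shell of every good ball containing them --- and then write that ``one expects'' $|\Omega_\alpha\setminus G|\lesssim_n t\,|E|$. The heuristic you offer does not deliver this. Knowing that a leftover point $x$ lies in the $\tfrac{1}{1-t}$-dilate of a ball that is good at the slightly lower level $1-(1-\alpha)(1-t)^{-n}$ still leaves you needing a covering step to pass from a union of such dilates to a multiple of $|E|$, and any standard Vitali or Besicovitch selection applied to those shrunken balls yields only $|\Omega_\alpha\setminus G|\lesssim_n|E|$ with an implied constant that does \emph{not} tend to $0$ as $t\to0$: the dilation factor is $1+O(t)$, but the multiplicity constant coming from the covering lemma is a fixed dimensional number, not $O(t)$. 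Nothing in your setup forces the leftover set to be small merely because each individual shell is thin --- without overlap control, a union of thin shells can carry measure comparable to the union of the full balls. The proofs in \cite{Solyanik} and \cite{HP} obtain the exponent $\tfrac{1}{n+1}$ through a refined, parameter-dependent selection lemma for families of balls that simultaneously controls covering and overlap; that lemma (or an equivalent device) is the missing ingredient here, and your core/shell decomposition, while appealing, does not circumvent the need for it.
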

%%%%%%%%%%%%%%%%%%%%%%%%%%%%%% THEOREM THEOREM THEOREM

Analogues of this result also exist for the strong maximal operator. Indeed, we have

%%%%%%%%%%%%%%%%%%%%%%%%%%%%%% THEOREM THEOREM THEOREM
\begin{theorem}[Hagelstein, Parissis, \cite{HP}, Solyanik, \cite{Solyanik}]\label{strongsolyanik}
Let $\Cstr (\alpha)$ denote the sharp Tauberian constant of $\Mstr$ with respect to $\alpha$. Then $\lim_{\alpha \rightarrow 1^-}\Cstr (\alpha) = 1$. In particular we have that
\[
\Cstr (\alpha) - 1 \lesssim_n \left(\frac{1}{\alpha} - 1\right)^{\frac{1}{n}}
\]
for $\alpha$ sufficiently close to 1.
\end{theorem}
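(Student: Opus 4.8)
The plan is to prove Theorem~\ref{strongsolyanik} by induction on the dimension, at each step peeling off one coordinate and reducing the estimate in $\R^{n}$ to the one in $\R^{n-1}$ together with the one–dimensional Hardy--Littlewood Solyanik estimate. For $k\ge1$ write $\mathsf{M}^{(k)}_{\mathsf{S}}$ for the strong maximal operator on $\R^{k}$ and $\mathsf{C}^{(k)}_{\mathsf{S}}(\alpha)$ for its sharp Tauberian constant, so that $\mathsf{C}^{(n)}_{\mathsf{S}}=\Cstr$ and the goal is $\mathsf{C}^{(n)}_{\mathsf{S}}(\alpha)-1\lesssim_n(\tfrac1\alpha-1)^{1/n}$ for $\alpha$ near $1$. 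The base case $k=1$ is the statement $\mathsf{C}^{(1)}_{\mathsf{HL}}(\alpha)-1\lesssim1-\alpha$ for the one–dimensional uncentered Hardy--Littlewood operator; this is the sharp form of the $n=1$ case of Theorem~\ref{solyanikthm}(ii) (in fact $\mathsf{C}^{(1)}_{\mathsf{HL}}(\alpha)=\tfrac{2-\alpha}{\alpha}$; see \cite{Solyanik,HP}), and it is essential here to use the \emph{linear} decay, since with a one–dimensional rate $(1-\alpha)^{p}$ the induction closes only at exponent $p/n$.

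For the inductive step, fix $n\ge2$, assume the estimate in dimension $n-1$, and split $\R^n=\R^{n-1}\times\R$, writing points as $(y,t)$ and slices $E_y\coloneqq\{t:(y,t)\in E\}$; let $\mathsf{M}^{(1)}_{\mathsf{HL}}$ act in the $t$–variable, slice by slice. Given $E$ with $0<|E|<\infty$ and a parameter $\gamma\in(0,\alpha)$, put $\tilde E\coloneqq\{(y,t):\mathsf{M}^{(1)}_{\mathsf{HL}}\chi_{E_y}(t)>\gamma\}$; Fubini and the one–dimensional Solyanik estimate on each slice give $|\tilde E|\le\mathsf{C}^{(1)}_{\mathsf{HL}}(\gamma)\,|E|$. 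Now suppose $\mathsf{M}^{(n)}_{\mathsf{S}}\chi_E(x,s)>\alpha$, witnessed by a rectangle $R=P\times I\ni(x,s)$ with $|R\cap E|>\alpha|R|$, so that $\tfrac1{|P|}\int_P\tfrac{|I\cap E_y|}{|I|}\,dy>\alpha$; since the integrand lies in $[0,1]$, the set $G\coloneqq\{y\in P:\tfrac{|I\cap E_y|}{|I|}>\gamma\}$ satisfies $|G|>\tfrac{\alpha-\gamma}{1-\gamma}|P|$. For $y\in G$ and any $t'\in I$ we have $\mathsf{M}^{(1)}_{\mathsf{HL}}\chi_{E_y}(t')\ge\tfrac{|I\cap E_y|}{|I|}>\gamma$, so $\{y\}\times I\subset\tilde E$; in particular, as $s\in I$, the slice $\tilde E_s\coloneqq\{y:(y,s)\in\tilde E\}$ contains $G$, whence $|P\cap\tilde E_s|\ge|G|>\tfrac{\alpha-\gamma}{1-\gamma}|P|$, and since $x\in P$ we conclude $\mathsf{M}^{(n-1)}_{\mathsf{S}}\chi_{\tilde E_s}(x)>\delta$ with $\delta\coloneqq\tfrac{\alpha-\gamma}{1-\gamma}$. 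Hence $\{\mathsf{M}^{(n)}_{\mathsf{S}}\chi_E>\alpha\}\subseteq\{(x,s):\mathsf{M}^{(n-1)}_{\mathsf{S}}\chi_{\tilde E_s}(x)>\delta\}$, and integrating the $(n-1)$–dimensional estimate in $s$,
\[
\bigl|\{\mathsf{M}^{(n)}_{\mathsf{S}}\chi_E>\alpha\}\bigr|\le\int_{\R}\mathsf{C}^{(n-1)}_{\mathsf{S}}(\delta)\,|\tilde E_s|\,ds=\mathsf{C}^{(n-1)}_{\mathsf{S}}(\delta)\,|\tilde E|\le\mathsf{C}^{(n-1)}_{\mathsf{S}}(\delta)\,\mathsf{C}^{(1)}_{\mathsf{HL}}(\gamma)\,|E|.
\]
Taking the supremum over $E$ gives, for every $\gamma\in(0,\alpha)$, the recursion $\mathsf{C}^{(n)}_{\mathsf{S}}(\alpha)\le\mathsf{C}^{(n-1)}_{\mathsf{S}}\!\bigl(\tfrac{\alpha-\gamma}{1-\gamma}\bigr)\,\mathsf{C}^{(1)}_{\mathsf{HL}}(\gamma)$.

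To close the induction I would choose $\gamma$ by $1-\gamma\coloneqq(1-\alpha)^{1/n}$ (then $\gamma<\alpha$ and $\gamma\to1$ as $\alpha\to1$), so that the defect of $\delta$ is $1-\delta=\tfrac{1-\alpha}{1-\gamma}=(1-\alpha)^{(n-1)/n}$, hence $(1-\delta)^{1/(n-1)}=(1-\alpha)^{1/n}$. Feeding the inductive hypothesis $\mathsf{C}^{(n-1)}_{\mathsf{S}}(\delta)-1\lesssim_{n-1}(1-\delta)^{1/(n-1)}$ and the base estimate $\mathsf{C}^{(1)}_{\mathsf{HL}}(\gamma)-1\lesssim1-\gamma$ into the recursion yields, once $\alpha$ is close enough to $1$, $\mathsf{C}^{(n)}_{\mathsf{S}}(\alpha)\le\bigl(1+O_n((1-\alpha)^{1/n})\bigr)^{2}=1+O_n((1-\alpha)^{1/n})$; since $1-\alpha\asymp\tfrac1\alpha-1$ near $1$, this is the asserted bound. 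The limit $\lim_{\alpha\to1^-}\Cstr(\alpha)=1$ then follows because the Lebesgue density theorem gives $\mathsf{M}^{(n)}_{\mathsf{S}}\chi_E=1$ a.e.\ on $E$, hence $\Cstr(\alpha)\ge1$ for every $\alpha\in(0,1)$.

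I expect the crux to be the inductive step, and within it the ``density localization'': turning a single near–extremal rectangle $R$ for the point $(x,s)$ into a pointwise lower bound for the $(n-1)$–dimensional strong maximal function of \emph{one} slice of the auxiliary set $\tilde E$, with the quantifiers over $y\in G$, over $t'\in I$ and over the fixed height $t=s$ arranged so that the slice at $t=s$ carries the information. Once this is in place the measure bookkeeping is handled entirely by Fubini, so no C\'ordoba--Fefferman covering lemma is needed. The only other subtle point is the arithmetic of the balancing: the exponent $1/n$ appears precisely because the one–dimensional input has the sharp linear rate; the weaker $(1-\alpha)^{1/2}$–type rate would yield only $(1-\alpha)^{1/(2n)}$. (Measurability of the superlevel sets and applicability of Fubini are routine, the strong maximal function being lower semicontinuous.)
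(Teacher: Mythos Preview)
The paper does not give its own proof of Theorem~\ref{strongsolyanik}; the result is quoted from \cite{Solyanik} and \cite{HP} and used as a black box in the proof of Theorem~\ref{t1}. Your argument is correct and is in fact essentially Solyanik's original induction: at each step one fixes a coordinate, uses the Chebyshev-type localization $|G|>\tfrac{\alpha-\gamma}{1-\gamma}|P|$ to turn an $n$-dimensional $\alpha$-dense rectangle into an $(n-1)$-dimensional $\delta$-dense rectangle for the one-dimensionally thickened set $\tilde E$, obtains the recursion $\mathsf{C}^{(n)}_{\mathsf S}(\alpha)\le \mathsf{C}^{(n-1)}_{\mathsf S}(\delta)\,\mathsf{C}^{(1)}_{\mathsf{HL}}(\gamma)$, and then balances $1-\gamma=(1-\alpha)^{1/n}$ so that both factors contribute a defect of size $(1-\alpha)^{1/n}$. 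The only point worth making explicit is the order of quantifiers: the inductive hypothesis holds for $\delta$ sufficiently close to $1$ (with a threshold depending on $n-1$), so one should note that choosing $\alpha$ close enough to $1$ forces both $\gamma$ and $\delta$ into the admissible range before invoking the lower-dimensional bounds; you address this, but it is the one place such inductions can silently fail.
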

%%%%%%%%%%%%%%%%%%%%%%%%%%%%%% THEOREM THEOREM THEOREM

Now, as is well-known, a close relationship exists between the distribution functions of geometric maximal functions commonly arising in harmonic analysis and their counterparts in ergodic theory. Papers describing this correspondence range from the pioneering work of Calder\'on on transference principles, see~\cite{calderon1968}, to the more recent work of Hagelstein and Stokolos, \cite{hs2012}. Hence it is quite natural to inquire as to whether the above \emph{Solyanik  estimates} have analogues in the ergodic theoretic context. The purpose of this paper is to show that desirable Solyanik estimates do indeed exist for ergodic analogues of the centered and uncentered Hardy-Littlewood maximal operators as well as the strong maximal operator. Our techniques will be rather classical, relying on the above estimates for the geometric strong maximal operator as well as the ideas behind the general transference principles of Calder\'on. We will also indicate intriguing directions for future work regarding Solyanik estimates in ergodic theory.

It is interesting to observe that, although the Solyanik estimate for the Hardy-Littlewood maximal operator was first observed and proven only twenty years ago, the corresponding Solyanik estimate for  one-parameter ergodic maximal operators have been known since the infancy of ergodic theory. In particular, let $T$ be a measure preserving transformation on a probability space $(\Omega, \Sigma, \mu)$.   We may associate to $T$ the maximal operator $T^\ast$ defined by
\[
T^\ast f(\omega) \coloneqq \sup_{N\geq 1}\frac{1}{N}\sum_{j=0}^{N-1}|f(T^j\omega)|.
\]
Then
\[
\mu(\{\omega \in \Omega : T^\ast f(\omega) > \alpha\} \leq \frac{1}{\alpha}\int_{\Omega}|f|\;d\mu\;.
\]
This result goes back to Wiener \cite{Wiener39} and Yosida and Kakutani \cite{YosidaKakutani1939}.  One may consult Petersen \cite{Petersen} for a more recent presentation of a proof of this result. This directly implies that
\[
\mu(\{\omega \in \Omega : T^\ast\chi_E(\omega) > \alpha\}) \leq \frac{1}{\alpha}\mu(E)\;,
\]
and hence
\[
\lim_{\alpha \rightarrow 1^-}\sup_{\substack{E \subset \Omega \\ \mu(E) > 0}}\frac{1}{\mu(E)}\mu(\{\omega \in \Omega : T^\ast \chi_E (\omega) > \alpha\}) = 1\;.
\]
However, this estimate may not be iterated to directly achieve Solyanik estimates for \emph{multiparameter} ergodic maximal operators, in particular for ergodic maximal operators associated to multiple measure preserving transformations. Using additional ideas we will show that Solyanik estimates for multiparameter ergodic maximal operators indeed do hold. For specificity, we will now explicitly define analogues of the centered and uncentered Hardy-Littlewood maximal operators in the ergodic setting and state three theorems regarding the Solyanik estimates associated to these ergodic maximal operators, the proofs of which constitute the following four sections of this paper.

We first introduce appropriate collections of sets in $\R^n$ which we will use in order to define our ergodic maximal operators. These collections will be liberally called  \emph{bases} and will be used throughout the paper.   Being very specific, in this paper a \emph{basis} in $\mathbb{R}^n$ will be a collection of bounded open subsets of $\mathbb{R}^n$ containing the origin.  We will be giving close consideration to three particular bases and accordingly give them special notation.

%%%%%%%%%%%%%%%%%%%%%%%%%%%%%% DEFINITION DEFINITION DEFINITION
\begin{definition} We denote by  $\mathcal B_{\mathsf{S}}$ the collection of all open rectangles in $\R^n$ which contain the origin  and have sides parallel to the coordinate axes, by $\mathcal B_{\mathsf{HL}}$ the collection of all open Euclidean balls in $\R^n$ which contain the origin, and by $\mathcal B_{\mathsf {HL,c}}$ the collection of all open Euclidean balls in $\R^n$ which are centered at the origin.
\end{definition}
%%%%%%%%%%%%%%%%%%%%%%%%%%%%%% DEFINITION DEFINITION DEFINITION

Let now $\mathcal B$ be a basis in $\R^n$. We will consider three types of maximal operators associated with $\mathcal B$ together with their associated Tauberian constants.

%%%%%%%%%%%%%%%%%%%%%%%%%%%%%% DEFINITION DEFINITION DEFINITION
\begin{definition} Let $\mathcal B$ be a basis in $\R^n$.
\begin{enumerate}[leftmargin=3em,labelindent=-2pt]	
\item[(i)] The \emph{geometric maximal operator} $\M_{\mathcal B}$ associated with $\mathcal B$ is defined on $f\in L^1_{loc}(\R^n)$ as
\[
\M_{\mathcal B}f(x)\coloneqq \sup_{R\in\mathcal B} \frac{1}{|R|}\int_R |f(x+y)|dy,\quad x\in \R^n.
\]
\item[(ii)] The \emph{discrete geometric maximal operator} $\tilde {\M}_{\mathcal B}$ associated with $\mathcal B$ is defined on $f \in L^1_{loc}(\Z^n)$ as
\[
\tilde{\M}_{\mathcal B}f(m)\coloneqq \sup_{R \in \mathcal B}\frac{1}{\#(R\cap \Z^n)} \sum_{j=(j_1, \ldots, j_n)\in R\cap \mathbb{Z}^n} |f(m+j)|,\quad m\in \Z^n.
\]
 \item[(iii)] Let $U_1,\ldots,U_n$ be a collection of measure preserving transformations on a probability space $(\Omega,\Sigma,\mu)$. The \emph{ergodic maximal operator} $\M^\ast _{\mathcal B}$ associated with $\mathcal B$ is the maximal operator defined on $f \in L^1(\Omega)$ as
\[
\M ^\ast_{\mathcal{B}} f(\omega) \coloneqq \sup_{R\in \mathcal B} \frac{1}{\#(R\cap \Z^n)} \sum_{j=(j_1, \ldots, j_n)\in R\cap \mathbb{Z}^n} |f(U_1 ^{j_1}U_2 ^{j_2}\cdots U_n ^{j_n}\omega )|,\quad \omega \in \Omega.
\]
\end{enumerate}
\end{definition}
%%%%%%%%%%%%%%%%%%%%%%%%%%%%%% DEFINITION DEFINITION DEFINITION
For these geometric, discrete, and ergodic maximal operators there is a natural definition of the corresponding sharp Tauberian constants. We make this precise below.
%%%%%%%%%%%%%%%%%%%%%%%%%%%%%% DEFINITION DEFINITION DEFINITION
\begin{definition}
Let $\mathcal{B}$ be a collection of bounded open subsets of $\mathbb{R}^n$ containing the origin and let $0 < \alpha < 1$. The Tauberian constants associated to the maximal operators $\MB$, $\MDB$, and $\MEB$ are defined respectively by
\[
\begin{split}
\CB &= \sup_{\substack{E \subset \mathbb{R}^n \\ 0 < |E| < \infty}}\frac{1}{|E|}\left|\left\{x\in \mathbb{R}^n : \MB\chi_E(x) > \alpha\right\}\right|,
\\
\CDB &= \sup_{\substack{E \subset \mathbb{Z}^n \\ 0 < \#E < \infty}} \frac{1}{\#E}\#\left\{m \in \mathbb{Z}^n : \MDB\chi_E(m) > \alpha\right\}, \quad\text{and}
\\
\CEB &= \sup_{\substack{E \subset \Omega \\ \mu(E)>0}} \frac{1}{\mu(E)}\mu\left(\left\{\omega \in \Omega : \MEB \chi_E(\omega) > \alpha\right\}\right).
\end{split}
\]
\end{definition}
%%%%%%%%%%%%%%%%%%%%%%%%%%%%%% DEFINITION DEFINITION DEFINITION

%%%%%%%%%%%%%%%%%%%%%%%%%%%%%% REMARK REMARK REMARK
\begin{remark}
As noted before, in this paper will will be primarily interested in the bases  $\mathcal{B}_{\mathsf S}, \mathcal{B}_{\mathsf {HL}}$, and $\mathcal{B}_{\mathsf {HL,c}}$, corresponding to the strong maximal operator, the uncentered Hardy-Littlewood maximal operator, and the centered Hardy-Littlewood maximal operator, respectively. As a convenient shorthand notation, we will denote the ergodic maximal operators $\mathsf{M}^\ast_{\mathcal{B}_{\mathsf S}}$, $\mathsf{M}^\ast_{\mathcal{B}_{\mathsf {HL}}}$, and $\mathsf{M}^\ast_{\mathcal{B}_{\mathsf {HL,c}}}$  respectively by $\Ms$, $\MHLE$, and $\MHLCE$.  We will also denote the discrete maximal operators $\tilde{\mathsf M}_{\mathcal{B}_{\mathsf S}}$, $\tilde{\mathsf M}_{\mathcal{B}_{\mathsf {HL}}}$, and $\tilde{\mathsf M}_{\mathcal{B}_{\mathsf {HL,c}}}$  respectively by $\DSM$, $\MHLD$, and $\MHLCD$.

The same notational conventions will be consistently applied for the Tauberian constants corresponding to the bases $\mathcal B_{\mathsf S}, \mathcal B_{\mathsf{HL}}$, and $\mathcal B_{\mathsf{HL,c}}$ in both the ergodic and discrete contexts.   Thus we will have
\[
\begin{split}
\CSD(\alpha) &\coloneqq  \tilde{\mathsf{C}}_{\mathcal{B}_{\mathsf S}}(\alpha) =\sup_{E \subset \mathbb{Z}^n \atop 0 < \#E < \infty}\frac{1}{\#E}\#\{x \in \mathbb{Z}^n : \tilde{\mathsf{M}}_{\mathsf S}(x) > \alpha\},
	\\
\CHLD(\alpha) &\coloneqq  \tilde{\mathsf{C}}_{\mathcal{B}_{\mathsf {HL}}}(\alpha) = \sup_{E \subset \mathbb{Z}^n \atop 0 < \#E < \infty}\frac{1}{\#E}\#\{x \in \mathbb{Z}^n : \tilde{\mathsf{M}}_{\mathsf {HL}}(x) > \alpha\},
	\\
\CHLCD(\alpha) &\coloneqq  \tilde{\mathsf{C}}_{\mathcal{B}_{\mathsf {HL,c}}}(\alpha) = \sup_{E \subset \mathbb{Z}^n \atop 0 < \#E < \infty}\frac{1}{\#E}\#\{x \in \mathbb{Z}^n : \tilde{\mathsf{M}}_{\mathsf {HL,c}}(x) > \alpha\},
	\\
\Cs (\alpha) &\coloneqq \mathsf{C} ^\ast _{\mathcal B_{\mathsf S}} (\alpha)=  \sup_{\substack{E \subset \Omega \\   \mu(E)>0}}\frac{1}{\mu(E)}\mu(\{\omega \in \Omega : \Ms  \chi_E(\omega) > \alpha\})\;,
\\
\CHLE (\alpha) &\coloneqq \mathsf{C} ^\ast _{\mathcal B_{\mathsf {HL}}}(\alpha)  = \sup_{\substack{E \subset \Omega \\  \mu(E)>0}}\frac{1}{\mu(E)}\mu(\{\omega \in \Omega : \MHLE  \chi_E(\omega) > \alpha\}), \quad\text{and}
\\
\CHLCE (\alpha) &\coloneqq \mathsf{C} ^\ast _{\mathcal B_{\mathsf {HL,c}}}(\alpha) = \sup_{\substack{ E \subset \Omega \\   \mu(E)>0} }\frac{1}{\mu(E)}\mu(\{\omega \in \Omega : \MHLCE \chi_E(\omega) > \alpha\}).
\end{split}
\]
\end{remark}
%%%%%%%%%%%%%%%%%%%%%%%%%%%%%% REMARK REMARK REMARK

The following theorems are the main results of this paper and provide the ergodic theoretic  Solyanik estimates for the ergodic strong, centered, and uncentered Hardy-Littlewood maximal operators, respectively.
%%%%%%%%%%%%%%%%%%%%%%%%%%%%%% THEOREM THEOREM THEOREM
\begin{theorem}\label{t1}
 Let $\Ms$ denote the ergodic strong maximal operator associated with a collection  $U_1, \ldots, U_n$ of commuting measure preserving transformations on a probability space $(\Omega, \Sigma, \mu)$, and let $\Cs(\alpha)$ be the associated sharp Tauberian constants for $0<\alpha<1$.  Then $\lim_{\alpha \rightarrow 1^-} \Cs(\alpha) = 1 $. In particular we have that
\[
\Cs (\alpha) - 1 \lesssim_n \big(\frac{1}{\alpha} - 1\big)^{1/n}
\]
for $\alpha$ sufficiently close to $1$.
\end{theorem}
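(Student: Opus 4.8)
The plan is to deduce Theorem~\ref{t1} from the geometric Solyanik estimate (Theorem~\ref{strongsolyanik}) by a two-step transference establishing
\[
\Cs(\alpha)\;\le\;\CSD(\alpha)\;\le\;\Cstr(\alpha)\qquad(0<\alpha<1),
\]
where $\DSM$ and $\CSD$ denote the discrete strong maximal operator $\tilde{\mathsf M}_{\mathcal B_{\mathsf S}}$ and its sharp Tauberian constant. Given this chain, the right-hand inequality and Theorem~\ref{strongsolyanik} give $\Cs(\alpha)-1\lesssim_n(1/\alpha-1)^{1/n}$ for $\alpha$ near $1$, while the bound $\Cs(\alpha)\ge1$ (since $\Ms\chi_E\ge\chi_E$, as seen by taking $R$ with $R\cap\Z^n=\{0\}$) forces $\lim_{\alpha\to1^-}\Cs(\alpha)=1$.

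\emph{First inequality (discretization).} Given a finite $E\subset\Z^n$, I would pass to $\tilde E\coloneqq\bigcup_{m\in E}Q_m\subset\R^n$, where $Q_m\coloneqq m+[0,1)^n$; these cubes tile $\R^n$ and $|\tilde E|=\#E$. The point is that whenever $\DSM\chi_E(m)>\alpha$, witnessed by an axis-parallel rectangle $R\ni0$, the set $\bigcup_{j\in R\cap\Z^n}Q_{m+j}$ is an \emph{exact} union of unit cubes, hence an axis-parallel box of volume precisely $\#(R\cap\Z^n)$; letting $R'$ be a sufficiently small open enlargement of it, $R'$ contains every $x\in Q_m$, and, since the witnessing ratio $\#\{j\in R\cap\Z^n:m+j\in E\}/\#(R\cap\Z^n)$ strictly exceeds $\alpha$, one gets $|R'\cap\tilde E|/|R'|>\alpha$. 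Thus $\Mstr\chi_{\tilde E}>\alpha$ on all of $Q_m$. Summing the measures of the disjoint cubes $Q_m$ over $\{m:\DSM\chi_E(m)>\alpha\}$ gives
\[
\#\{m:\DSM\chi_E(m)>\alpha\}\;\le\;\big|\{\Mstr\chi_{\tilde E}>\alpha\}\big|\;\le\;\Cstr(\alpha)\,|\tilde E|\;=\;\Cstr(\alpha)\,\#E,
\]
and a supremum over $E$ completes this step.

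\emph{Second inequality (Calder\'on transference \cite{calderon1968}).} Fix $\omega\in\Omega$, write $U^m\coloneqq U_1^{m_1}\cdots U_n^{m_n}$, and set $f_\omega(j)\coloneqq\chi_E(U^j\omega)$. Since the $U_i$ commute, the ergodic average of $\chi_E$ over a rectangle $R\ni0$ at the point $U^m\omega$ equals the discrete average of $f_\omega$ over $R$ at $m$. Fixing $L>0$ and restricting to rectangles of side length at most $L$ (call the resulting operator $\Ms^{(L)}$; its maximal function is a maximum of finitely many averages, hence measurable), one checks that for $m\in[-N,N]^n\cap\Z^n$, $\Ms^{(L)}\chi_E(U^m\omega)>\alpha$ implies $\DSM\chi_{E_\omega^N}(m)>\alpha$, where $E_\omega^N\coloneqq\{m'\in[-N-L,N+L]^n\cap\Z^n:U^{m'}\omega\in E\}$. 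Applying the definition of $\CSD(\alpha)$, integrating in $\omega$, and using measure-preservation of the $U_i$ yields
\[
\#\big([-N,N]^n\cap\Z^n\big)\,\mu\!\left(\{\Ms^{(L)}\chi_E>\alpha\}\right)\;\le\;\CSD(\alpha)\,\#\big([-N-L,N+L]^n\cap\Z^n\big)\,\mu(E).
\]
Letting $N\to\infty$ with $L$ fixed (the ratio of the two lattice-point counts tends to $1$) gives $\mu(\{\Ms^{(L)}\chi_E>\alpha\})\le\CSD(\alpha)\,\mu(E)$, and then $L\to\infty$, via $\{\Ms\chi_E>\alpha\}=\bigcup_{L}\{\Ms^{(L)}\chi_E>\alpha\}$, gives $\Cs(\alpha)\le\CSD(\alpha)$.

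\emph{Main obstacle.} The whole point is to transfer \emph{without losing a constant}: a bound of the form $\Cs(\alpha)\le C_n\,\Cstr(\alpha')$ with a genuine loss would be worthless, since the estimates must force the Tauberian constants to tend to $1$. In Step~1 the danger is thin rectangles, where lattice-point counts and volumes can differ arbitrarily; this is neutralized exactly by realizing the comparison rectangle as a union of unit cubes, making volume equal to lattice-point count on the nose. In Step~2 the danger is the boundary layer of the truncation box $[-N-L,N+L]^n$; this is neutralized by sending $N\to\infty$ with $L$ fixed. I expect these two bookkeeping points, rather than any deeper difficulty, to be the crux.
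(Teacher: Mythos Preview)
Your proposal is correct and follows essentially the same approach as the paper: the paper isolates the Calder\'on transference $\Cs(\alpha)\le\CSD(\alpha)$ as a separate general theorem (Theorem~\ref{tt}) with the same boundary-layer argument you give, and then proves $\CSD(\alpha)\le\Cstr(\alpha)$ via the identical union-of-unit-cubes trick, exploiting that for axis-parallel $R$ the set $\bigcup_{j\in R\cap\Z^n}Q_{m+j}$ is itself an axis-parallel box with volume exactly $\#(R\cap\Z^n)$. Your identification of the two potential losses---thin rectangles and the boundary layer---and how they are neutralized matches the paper's reasoning precisely.
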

%%%%%%%%%%%%%%%%%%%%%%%%%%%%%% THEOREM THEOREM THEOREM

%%%%%%%%%%%%%%%%%%%%%%%%%%%%%% THEOREM THEOREM THEOREM
\begin{theorem}\label{t2}
 Let $\MHLCE$ denote the ergodic centered Hardy-Littlewood maximal operators associated with a collection  $U_1, \ldots, U_n$  of commuting measure preserving transformations on a probability space $(\Omega, \Sigma, \mu)$, and let $\CHLCE$ be the associated sharp Tauberian constants for $0 < \alpha < 1$.  Then $\lim_{\alpha \rightarrow 1^-} \CHLCE (\alpha) = 1$. In particular we have that
\[
\CHLCE (\alpha) - 1 \lesssim_n \frac{1}{\alpha} - 1
\]
for $\alpha$ sufficiently close to $1$.
\end{theorem}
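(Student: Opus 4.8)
The plan is to transfer the estimate to the integer lattice $\Z^n$ by a Calder\'on‑type argument, and there to prove the corresponding discrete Solyanik estimate by a Besicovitch covering argument --- the same covering argument that underlies the proof of Theorem~\ref{solyanikthm}(i) for the continuous centered operator.

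First I would establish the transference inequality $\CHLCE(\alpha)\le\CHLCD(\alpha)$. Fix a measurable $E\subset\Omega$ with $\mu(E)>0$, and for $\omega\in\Omega$ let $g_\omega\colon\Z^n\to\{0,1\}$ be the pullback $g_\omega(j)\coloneqq\chi_E(U_1^{j_1}\cdots U_n^{j_n}\omega)$. Since the $U_i$ commute, $\MHLCE\chi_E(U_1^{k_1}\cdots U_n^{k_n}\omega)=\MHLCD g_\omega(k)$ for every $k\in\Z^n$, and the same identity holds for the maximal operators obtained by restricting the supremum to balls of radius at most $K$; write $\M^{(K)}$ for this truncation of $\MHLCE$. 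Averaging $\chi_{\{\M^{(K)}\chi_E>\alpha\}}$ along the orbit over $k$ in a large cube $Q_M=[-M,M]^n\cap\Z^n$, noting that only orbit points with lattice index in $Q_{M+K}$ then enter the computation, and using that each $U_i$ preserves $\mu$, one arrives at
\[
\mu\bigl(\{\omega:\M^{(K)}\chi_E(\omega)>\alpha\}\bigr)\le\frac{(2M+2K+1)^n}{(2M+1)^n}\,\CHLCD(\alpha)\,\mu(E).
\]
Letting $M\to\infty$ and then $K\to\infty$ (the truncated operators increasing to $\MHLCE$) yields $\CHLCE(\alpha)\le\CHLCD(\alpha)$.

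Next I would prove $\CHLCD(\alpha)-1\lesssim_n 1-\alpha$ for $\alpha$ near $1$. Fix a finite $E\subset\Z^n$ and put $O_\alpha\coloneqq\{m\in\Z^n:\MHLCD\chi_E(m)>\alpha\}$. A ball in $\mathcal B_{\mathsf{HL,c}}$ of small radius meets $\Z^n$ only in $\{0\}$, so $\MHLCD\chi_E(m)\ge\chi_E(m)$, whence $E\subseteq O_\alpha$ and $\#O_\alpha=\#E+\#(O_\alpha\setminus E)$. For each $m\in O_\alpha\setminus E$ select a ball $B(m,\rho_m)$ centered at $m$ with $\#(E^{c}\cap B(m,\rho_m)\cap\Z^n)<(1-\alpha)\,\#(B(m,\rho_m)\cap\Z^n)$; since $m$ itself lies in the left‑hand set, $\#(B(m,\rho_m)\cap\Z^n)>1/(1-\alpha)$, and as $E$ is finite the radii $\rho_m$ stay bounded. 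The Besicovitch covering theorem applied to $\{B(m,\rho_m)\}_{m\in O_\alpha\setminus E}$ produces a finite subfamily $\{B_i\}$ covering $O_\alpha\setminus E$ with pointwise overlap at most a dimensional constant $C_n$; this bound persists for counting measure on $\Z^n$. With $D\coloneqq\#\bigl(E^{c}\cap\bigcup_i B_i\cap\Z^n\bigr)\ge\#(O_\alpha\setminus E)$, summing the selected inequalities gives
\[
D\le\sum_i\#(E^{c}\cap B_i\cap\Z^n)<(1-\alpha)\sum_i\#(B_i\cap\Z^n)\le C_n(1-\alpha)\bigl(\#E+D\bigr),
\]
hence $D\le\frac{C_n(1-\alpha)}{1-C_n(1-\alpha)}\#E$ once $C_n(1-\alpha)<1$, so $\CHLCD(\alpha)-1\le\frac{C_n(1-\alpha)}{1-C_n(1-\alpha)}\le 2C_n(1-\alpha)$ for $\alpha$ close to $1$. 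Combining with the transference step and with $1-\alpha<\frac1\alpha-1$ gives $\CHLCE(\alpha)-1\le\CHLCD(\alpha)-1\lesssim_n\frac1\alpha-1$, and $\lim_{\alpha\to1^-}\CHLCE(\alpha)=1$ follows (the bound $\CHLCE(\alpha)\ge1$ being immediate from $\MHLCE\chi_E\ge\chi_E$).

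The fiddly but routine part is the truncation in the transference step, which keeps the orbit segments entering the averaging inside a controlled box. The step that really needs the right idea is the \emph{sharp} (linear) exponent in the discrete estimate: a Vitali covering with a fixed dilation factor only yields $\CHLCD(\alpha)\le C_n/\alpha$, and comparing $\MHLCD$ directly with the continuous centered operator $\MHLC$ is lossy --- a Euclidean ball of radius $\rho$ is modeled by a union of unit lattice cubes only to within relative error $\sim 1/\rho$, and since the relevant balls have $\rho\gtrsim_n(1-\alpha)^{-1/n}$, that route would only give $\CHLCE(\alpha)-1\lesssim_n(1-\alpha)^{1/n}$, the exponent that genuinely appears for the strong operator in Theorem~\ref{t1}. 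It is precisely the Besicovitch covering theorem --- which furnishes a bounded‑overlap subfamily with \emph{no} enlargement of the balls --- that exploits the special structure of centered balls and has no counterpart for the rectangles defining $\Ms$.
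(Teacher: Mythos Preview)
Your proposal is correct and follows essentially the same route as the paper: reduce to the discrete operator on $\Z^n$ via Calder\'on transference (the paper's Theorem~\ref{tt}), then run a Besicovitch covering argument on the lattice balls to get the linear bound $\CHLCD(\alpha)-1\lesssim_n(1-\alpha)/\alpha$. The only cosmetic difference is in the final counting: the paper selects balls for \emph{all} $m\in\tilde E_\alpha$ and bounds $\sum_j\#(B_j\cap\tilde E)\le A_n\,\#\tilde E$ directly via bounded overlap, yielding $\CHLCD(\alpha)\le 1+A_n\frac{1-\alpha}{\alpha}$ for every $\alpha\in(0,1)$, whereas you cover only $O_\alpha\setminus E$ and solve an implicit inequality for $D$ --- same idea, slightly different bookkeeping.
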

%%%%%%%%%%%%%%%%%%%%%%%%%%%%%% THEOREM THEOREM THEOREM

%%%%%%%%%%%%%%%%%%%%%%%%%%%%%% THEOREM THEOREM THEOREM
\begin{theorem}\label{t3}
 Let $\MHLE$ denote the ergodic Hardy-Littlewood maximal operators associated with a collection  $U_1, \ldots, U_n$  of commuting measure preserving transformations on a probability space $(\Omega, \Sigma, \mu)$, and let $\CHLE(\alpha)$ be the associated sharp Tauberian constants for $0 < \alpha < 1$.  Then $\lim_{\alpha \rightarrow 1^-} \CHLE (\alpha) = 1$. In particular we have that
\[
\CHLE  (\alpha) - 1 \lesssim_n \left(\frac{1}{\alpha} - 1\right)^{\frac{1}{n(n+1)}}
\]
for $\alpha$ sufficiently close to $1$.
\end{theorem}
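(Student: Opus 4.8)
The plan is to deduce Theorem~\ref{t3} from the geometric uncentered Solyanik estimate of Theorem~\ref{solyanikthm}(ii) in two stages: a Calder\'on‑type transference reducing the ergodic Tauberian constant to the discrete one, and a discretization step relating $\CHLD$ to $\CHL$. The extra factor of $n$ in the exponent — turning $\tfrac1{n+1}$ into $\tfrac1{n(n+1)}$ — will be created entirely by the discretization step.

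\emph{Stage~1: $\CHLE(\alpha)\le\CHLD(\alpha)$.} This is the standard Calder\'on transference, the same one used for Theorem~\ref{t1}. Given $E\subset\Omega$ nearly extremizing $\CHLE(\alpha)$, fix $\varepsilon>0$ and a radius cutoff $\varrho$, restrict the supremum defining $\MHLE$ to Euclidean balls of radius $\le\varrho$, and for $N\gg\varrho$ transport $E$ along the finite orbit piece $\{U_1^{j_1}\cdots U_n^{j_n}\omega:0\le j_i<N\}$ to a subset $E_\omega\subset\{0,\dots,N-1\}^n$; on a sub‑box of relative volume $1-O(\varrho/N)$ the truncated ergodic maximal function at $\omega$ is dominated by $\MHLD\chi_{E_\omega}$. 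Integrating in $\omega$ and letting $N\to\infty$, then $\varrho\to\infty$ and $\varepsilon\to0$, gives the bound; I expect this to be routine and essentially contained in the proof of Theorem~\ref{t1}.

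\emph{Stage~2: $\CHLD(\alpha)\le\CHL(\alpha')$ with $\alpha'=\alpha\bigl(1-c_n(1-\alpha)^{1/n}\bigr)$.} Fix a finite set $E\subset\Z^n$, set $L\coloneqq\{m\in\Z^n:\MHLD\chi_E(m)>\alpha\}$, let $Q$ be the open unit cube at the origin, and put $\tilde E\coloneqq E+Q$, $\tilde L\coloneqq L+Q$, so $\abs{\tilde E}=\#E$ and $\abs{\tilde L}=\#L$. Fix $\delta_0\coloneqq(\kappa_n(1-\alpha))^{1/n}$ with a dimensional constant $\kappa_n$ to be chosen, and call $m\in L$ \emph{short} if some ball witnessing $\MHLD\chi_E(m)>\alpha$ has radius $<\delta_0^{-1}$, and \emph{long} otherwise. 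A ball of radius $<\delta_0^{-1}$ contains $\lesssim_n\delta_0^{-n}$ lattice points, which is $<(1-\alpha)^{-1}$ once $\kappa_n$ is large and $\alpha$ is close to $1$; hence if such a ball has $E$‑density exceeding $\alpha$ it contains no lattice point outside $E$, and since it contains $m$ this forces every short $m$ into $E$. For a long $m$ pick a witnessing ball $B_m+m$ of radius $\rho_m\ge\delta_0^{-1}$ and center $z_m$; for any $x\in m+Q$ the Euclidean ball $B'\coloneqq B(z_m,\rho_m+\sqrt n)$ contains $x$ — because $\abs{x-z_m}\le\abs{x-m}+\abs{m-z_m}<\tfrac{\sqrt n}2+\rho_m$ — and contains $(B_m+m)+Q$, hence contains the disjoint union of unit cubes $\bigl((B_m+m)\cap\Z^n\cap E\bigr)+Q$, whose measure is $\#\bigl((B_m+m)\cap\Z^n\cap E\bigr)>\alpha\,\#\bigl((B_m+m)\cap\Z^n\bigr)\ge\alpha\,\abs{B(z_m,\rho_m-\tfrac{\sqrt n}2)}$. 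Dividing by $\abs{B'}$, using monotonicity in $\rho_m$ and $\rho_m\ge\delta_0^{-1}$, yields $\MHL\chi_{\tilde E}(x)>\alpha\bigl(\tfrac{\delta_0^{-1}-\sqrt n/2}{\delta_0^{-1}+\sqrt n}\bigr)^n\ge\alpha'$. Since $\tilde E$ is open, $\tilde E\subseteq\{\MHL\chi_{\tilde E}>\alpha'\}$ as well, so $\tilde L\subseteq\{\MHL\chi_{\tilde E}>\alpha'\}$ and $\#L=\abs{\tilde L}\le\CHL(\alpha')\,\#E$, i.e.\ $\CHLD(\alpha)\le\CHL(\alpha')$.

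Combining the stages with Theorem~\ref{solyanikthm}(ii) and $1-\alpha'\lesssim_n(1-\alpha)^{1/n}$, for $\alpha$ close enough to $1$ we obtain
\[
\CHLE(\alpha)-1\ \le\ \CHL(\alpha')-1\ \lesssim_n\ (1-\alpha')^{\frac1{n+1}}\ \lesssim_n\ (1-\alpha)^{\frac1{n(n+1)}}\ \le\ \Bigl(\tfrac1\alpha-1\Bigr)^{\frac1{n(n+1)}},
\]
which is the assertion of Theorem~\ref{t3}. The step I expect to be the main obstacle is the off‑centeredness in Stage~2: a witnessing discrete ball $B_m+m$ need not lie anywhere near $x$, and the instinctive move of enclosing it in a ball \emph{centered at $x$} would double the radius and cost a fixed factor $2^n$ in density, destroying the Solyanik gain; the resolution is to enclose it instead in a ball centered at its own center $z_m$ and inflated by only $\sqrt n$, which is still wide enough to capture the nearby point $x$ while keeping the radius comparable to $\rho_m$. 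The remaining points to verify are the elementary lattice‑point comparisons $\abs{B(z,\rho-\sqrt n/2)}\le\#(B(z,\rho)\cap\Z^n)\le\abs{B(z,\rho+\sqrt n/2)}$ and the bookkeeping showing that the short‑ball threshold $\delta_0\sim(1-\alpha)^{1/n}$, which the counting forces, is exactly what degrades the exponent $\tfrac1{n+1}$ to $\tfrac1{n(n+1)}$.
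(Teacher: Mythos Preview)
Your proposal is correct and follows essentially the same route as the paper: Stage~1 is exactly the transference principle of Theorem~\ref{tt}, and Stage~2 mirrors the paper's discretization argument, with your short/long dichotomy being the contrapositive of the paper's observation that any witnessing ball for $m\in\tilde E_\alpha\setminus\tilde E$ must contain more than $(1-\alpha)^{-1}$ lattice points and hence have radius $\gtrsim_n(1-\alpha)^{-1/n}$. The constants differ cosmetically (the paper uses cubes $[j,j+1)^n$ and inflates by $\sqrt n$ rather than $\sqrt n/2$), but the mechanism producing $\alpha'$ with $1-\alpha'\lesssim_n(1-\alpha)^{1/n}$---and hence the degraded exponent $\tfrac{1}{n(n+1)}$---is identical.
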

%%%%%%%%%%%%%%%%%%%%%%%%%%%%%% THEOREM THEOREM THEOREM

%%%%%%%%%%%%%%%%%%%%%%%%%%%%%% SECTION SECTION SECTION
\section{Notation} We use the letters $C,c$ to denote positive numerical constants whose value might change even in the same line of text. We indicate the dependence of some constant $C$ on a parameter $n$ by writing $C_n$. We use the notation $A\lesssim B$ whenever $A\leq C B$. If the implicit constant depends on some parameter $n$ we write $A\lesssim_n B$.
%%%%%%%%%%%%%%%%%%%%%%%%%%%%%% SECTION SECTION SECTION
\section{Transference of Solyanik estimates}\label{s.trans}
The purpose of this section is to provide a general transference principle that will enable us to transfer Solyanik type estimates for discrete maximal operators acting on $L^1(\mathbb{Z}^n)$ to their ergodic counterparts.

%%%%%%%%%%%%%%%%%%%%%%%%%%%%%% THEOREM THEOREM THEOREM
\begin{theorem}\label{tt}
Let $U_1, \ldots, U_n$ be a collection of commuting measure preserving transformations on a probability space $(\Omega, \Sigma, \mu)$ and $\mathcal{B}$ be a collection of nonempty bounded open subsets of  $\R^n$. Let $\MDB$ and $\M ^\ast_{\mathcal{B}}$ be the discrete and ergodic maximal operators as above, and  $\CDB$ and $\CEB$ their respective sharp Tauberian constants. For every $0 < \alpha < 1$ we have
  \[
  \CEB \leq \CDB.
  \]
\end{theorem}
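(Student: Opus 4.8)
The plan is to run the classical Calder\'on transference argument: pull a set $E\subseteq\Omega$ back along finite pieces of the orbits $\{U_1^{j_1}\cdots U_n^{j_n}\omega\}$ to produce finite subsets of $\Z^n$, apply the discrete Tauberian bound $\CDB$ there, and then average in $\omega$ and let the orbit pieces grow. Write $U^{j}\coloneqq U_1^{j_1}\cdots U_n^{j_n}$ for $j=(j_1,\dots,j_n)\in\Z^n$; by the commutativity hypothesis the $U^{j}$ form a commuting family of measure preserving transformations with $U^{j}U^{m}=U^{j+m}$. Fix $E\subseteq\Omega$ with $\mu(E)>0$; it suffices to prove $\mu(\{\MEB\chi_E>\alpha\})\leq\CDB\,\mu(E)$ and take the supremum over $E$ at the end. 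Since every $R\in\mathcal B$ is bounded, for $L\in\N$ let $\M^{\ast,L}_{\mathcal B}$ denote the variant of $\MEB$ in which the supremum is restricted to those $R\in\mathcal B$ with $R\subseteq(-L,L)^n$; then $\M^{\ast,L}_{\mathcal B}\chi_E\uparrow\MEB\chi_E$ pointwise as $L\to\infty$, so $\{\MEB\chi_E>\alpha\}=\bigcup_{L}\{\M^{\ast,L}_{\mathcal B}\chi_E>\alpha\}$ is an increasing union and it is enough to bound $\mu(\{\M^{\ast,L}_{\mathcal B}\chi_E>\alpha\})$ by $\CDB\,\mu(E)$ for each fixed $L$ (the maximal functions here being measurable, a routine point for the bases of interest).

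Fix $L$ and an integer $N\gg L$, and put $Q_{N}\coloneqq\{-N,\dots,N\}^n\subseteq\Z^n$. For $\omega\in\Omega$ define the finite set $E_\omega\coloneqq\{\,j\in Q_{N}:U^{j}\omega\in E\,\}$. The heart of the argument is the pointwise comparison: if $m\in Q_{N-L}$ and $\M^{\ast,L}_{\mathcal B}\chi_E(U^{m}\omega)>\alpha$, then $\MDB\chi_{E_\omega}(m)>\alpha$. Indeed, a witnessing rectangle $R\in\mathcal B$ with $R\subseteq(-L,L)^n$ gives, using $U^{j}U^{m}=U^{j+m}$,
\[
\frac{1}{\#(R\cap\Z^n)}\sum_{j\in R\cap\Z^n}\chi_E(U^{j+m}\omega)>\alpha;
\]
for $j\in R\cap\Z^n$ and $m\in Q_{N-L}$ one has $j+m\in Q_{N}$, whence $\chi_E(U^{j+m}\omega)=\chi_{E_\omega}(m+j)$, and the displayed quantity becomes one of the averages defining $\MDB\chi_{E_\omega}(m)$. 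Consequently
\[
\{\,m\in Q_{N-L}:\M^{\ast,L}_{\mathcal B}\chi_E(U^{m}\omega)>\alpha\,\}\subseteq\{\,m\in\Z^n:\MDB\chi_{E_\omega}(m)>\alpha\,\},
\]
and by the definition of $\CDB$ the set on the right has at most $\CDB\,\#E_\omega$ elements (trivially so when $E_\omega=\emptyset$).

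It remains to integrate this cardinality bound in $\omega$. By Tonelli and the measure invariance of the $U^{j}$ one gets $\int_\Omega \#E_\omega\,d\mu=(2N+1)^n\mu(E)$, while, using also $\mu(\{\omega:\M^{\ast,L}_{\mathcal B}\chi_E(U^m\omega)>\alpha\})=\mu(\{\M^{\ast,L}_{\mathcal B}\chi_E>\alpha\})$,
\[
\int_\Omega\#\{\,m\in Q_{N-L}:\M^{\ast,L}_{\mathcal B}\chi_E(U^{m}\omega)>\alpha\,\}\,d\mu=(2(N-L)+1)^n\,\mu(\{\M^{\ast,L}_{\mathcal B}\chi_E>\alpha\}).
\]
Combining, $(2(N-L)+1)^n\,\mu(\{\M^{\ast,L}_{\mathcal B}\chi_E>\alpha\})\leq\CDB\,(2N+1)^n\,\mu(E)$. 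Letting $N\to\infty$ with $L$ fixed, the edge factor $\big(\tfrac{2N+1}{2(N-L)+1}\big)^n\to1$ yields $\mu(\{\M^{\ast,L}_{\mathcal B}\chi_E>\alpha\})\leq\CDB\,\mu(E)$; letting $L\to\infty$ and then taking the supremum over $E$ gives $\CEB\leq\CDB$.

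I expect the only genuine care to be in the bookkeeping: the localization to rectangles contained in $(-L,L)^n$, which is exactly what keeps the translated indices $j+m$ inside $Q_N$ so that $\chi_E\circ U^{j+m}$ may be read off from $\chi_{E_\omega}$, and the boundary layer $Q_N\setminus Q_{N-L}$, whose relative cardinality must be shown to vanish as $N\to\infty$. The commutativity of the $U_i$ enters only through $U^{j}U^{m}=U^{j+m}$, and measure invariance only through the two integral identities above; everything else is the standard transference identity together with the definition of $\CDB$.
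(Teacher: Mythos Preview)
Your proof is correct and follows essentially the same Calder\'on transference argument as the paper: truncate the basis to sets contained in a fixed cube, pull the set $E$ back along a large finite piece of the $\Z^n$-orbit to obtain a finite subset of $\Z^n$, apply the discrete Tauberian bound there, integrate in $\omega$ using measure invariance and Fubini, and then let the boundary error vanish by sending the box size to infinity. The only cosmetic difference is that the paper packages the orbit data as a function $F_{E,T}(\omega,t)=\chi_E(U^t\omega)\chi_{(-T,T)^n}(t)$ rather than your set $E_\omega$, and uses parameters $(r,T)$ in place of your $(L,N)$, but the bookkeeping and the two limits are the same.
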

%%%%%%%%%%%%%%%%%%%%%%%%%%%%%% THEOREM THEOREM THEOREM

%%%%%%%%%%%%%%%%%%%%%%%%%%%%%% PROOF PROOF PROOF
\begin{proof}
We  proceed by taking advantage of transference principals developed by A. P. Calder\'on in \cite{calderon1968}.
Let $E$ be a measurable subset of $\Omega$ and $T>0$. We associate with $E$ the function $F_{E,T}$ on $\Omega \times \mathbb{Z}^n$ given by
\[
F_{E, T}(\omega, t) \coloneqq \chi_E(U_1^{t_1}\cdots U_n^{t_n}\omega) \chi_{(-T,T)^n}(t),\quad \omega\in\Omega,\quad t=(t_1,\ldots,t_n)\in \mathbb{Z}^n.
\]
Note that for fixed $t\in (-T,T)^n$ we have that the functions $F_{E,T}(\cdot, t)$ and $\chi_E$ are equimeasurable on $\Omega$.

Let $\omega \in \Omega$ be fixed and let $r > 0$. Also let $R \in \mathcal{B}$ such that $R \subset (-r, r)^n$.   If $m=(m_1, \ldots, m_n)\in\mathbb{Z}^n\cap[-T,T]^n$, we can write
\[
\begin{split}
&\frac{1}{\#R}\sum_{j=(j_1, \ldots, j_n)\in R}\chi_E(U_1^{j_1}\cdots U_n^{j_n}\omega)
\\
&\qquad=\frac{1}{\#R}\sum_{j=(j_1, \ldots, j_n)\in R}\chi_E(U_1^{m_1 + j_1}\cdots U_n^{m_n + j_n}U_1^{-m_1}\cdots U_n^{-m_n}\omega)\chi_{(-r-T,r+T)^n}(m+j)
\\
&\qquad= \frac{1}{\#R}\sum_{j=(j_1, \ldots, j_n)\in R} F_{E,r+T}(U_1^{-m_1}\cdots U_n^{-m_n}\omega,  m  + j )
\end{split}
\]
using the hypothesis that $U_1, \ldots, U_n$ commute. Defining the discrete maximal operator $\tilde{\M}_{\mathcal{B}, r}$ by
\[
\tilde{\M}_{\mathcal{B}, r}f(x) \coloneqq \sup_{\substack{ R \in \mathcal{B} \\ R \subset (-r,r)^n}}\frac{1}{\#R}\sum_{(j_1, \ldots, j_n)\in R}|f(x_1 + j_1, \ldots, x_n + j_n)|,\quad x=(x_1,\ldots,x_n)\in\Z^n
\]
and the ergodic maximal operator $\M^\ast_{\mathcal{B}, r}$ by
\[
\M ^\ast_{\mathcal{B}, r}f(\omega) \coloneqq \sup_{\substack{R \in \mathcal{B} \\ R \subset (-r,r)^n}} \frac{1}{\#R}\sum_{(j_1, \ldots, j_n)\in R}|f(U_1^{j_1}\cdots U_n^{j_n}\omega)|,\quad \omega\in\Omega,
\]
we have
\[
\M^\ast_{\mathcal{B},r} \chi_E (\omega) = [\tilde{\M}_{\mathcal{B},r}F_{E, r + T}(U_1^{-m_1}\cdots U_n^{-m_n}\omega, \cdot)](m).
\]
Now, using the fact that $U_1,\ldots,U_n$ are measure preserving together with Fubini's theorem we can conclude that for any $r,T\in\N$ we have
\[
\begin{split}
&\mu(\{\omega \in \Omega : \M^{\ast}_{\mathcal{B},r} \chi_E(\omega) > \alpha\})
\\
& = \frac{1}{(2T + 1)^n}\sum_{m_1 = -T} ^T \cdots \sum_{m_n = -T} ^T \mu(\{\omega \in \Omega :\M^{\ast}_{\mathcal{B},r } \chi_E(U_1^{m_1}\cdots U_n^{m_n}\omega) > \alpha\})
\\
& = \frac{1}{(2T + 1)^n}\sum_{m_1 = -T}^T \cdots \sum_{m_n = -T} ^T \mu(\{\omega \in \Omega : [ \tilde{\M}_{\mathcal{B},r} F_{E,r + T}( \omega, \cdot) ] (m) >\alpha\})
\\
&=\frac{1}{(2T + 1)^n} \int_{\Omega} \#\{ m \in \Z^n \cap [-T,T]^n : [ \tilde{\M}_{\mathcal{B},r}  F_{E,r+T}(\omega, \cdot)](m) > \alpha\}d\mu(\omega)
\\
&\leq \frac{1}{(2T + 1)^n} \int_{\Omega} \#\{ m \in \Z^n:   \tilde{\M}_{\mathcal{B}}  \chi_{E_{r,T,\omega}} (m) > \alpha\}d\mu(\omega)
\end{split}
\]
where $E_{r,T,\omega}$ is the set
\[
E_{r,T,\omega}\coloneqq\{t\in\N^n\cap (-r-T,r+T)^n: U_1 ^{t_1}\cdots U_n ^{t_n}\omega \in E \}.
\]
By the definition of the sharp Tauberian constant $ \tilde{\mathsf C} 	 _{\mathcal{B}}$ we can thus conclude that
\[
\begin{split}
\mu(\{\omega \in \Omega : \M^{\ast}_{\mathcal{B},r} \chi_E(\omega) > \alpha\})& \leq \frac{1}{(2T + 1)^n}\int_\Omega    \tilde{\mathsf C} _{\mathcal{B}}(\alpha)  \# E_{R,T,\omega} d\mu(\omega)
\\
& \leq \frac{(2T + 2r + 1)^n}{(2T + 1)^n}  \tilde{ \mathsf  C} _{\mathcal{B}}(\alpha) \mu(E)
\end{split}
\]
by another application of Fubini's theorem. Letting $T$ tend to infinity yields
\[
\mu(\{\omega \in \Omega :  \M^{\ast}_{\mathcal{B}, r} \chi_E(\omega) > \alpha\}) \leq  \tilde{\mathsf C }_{\mathcal{B}}(\alpha) \mu(E).
\]
Subsequently letting $r$ tend to infinity yields
\[
\mu(\{\omega \in \Omega : \M^\ast_{\mathcal{B}} \chi_E(\omega) > \alpha\}) \leq  \tilde{\mathsf C}_{\mathcal{B}}(\alpha) \mu(E).
\]
Hence
\[
\mathsf C^{\ast}_{\mathcal{B}}(\alpha) \leq \tilde{\mathsf C}_{\mathcal{B}}(\alpha)
\]
and the proof is complete.
\end{proof}
%%%%%%%%%%%%%%%%%%%%%%%%%%%%%% PROOF PROOF PROOF

%%%%%%%%%%%%%%%%%%%%%%%%%%%%%% SECTION SECTION SECTION
\section{Solyanik estimates for the ergodic strong maximal operator}
In this section we establish the Solyanik estimate for the ergodic strong maximal operator. The proof that follows utilizes the transference principle developed in the previous section together with the Euclidean counterpart of the desired estimate, contained in Theorem~\ref{strongsolyanik}.
%%%%%%%%%%%%%%%%%%%%%%%%%%%%%% PROOF PROOF PROOF
\begin{proof}[Proof of Theorem \ref{t1}]

We now establish the desired Solyanik estimates for the ergodic strong maximal operator $\Ms$ associated with some collection of commuting measure preserving transformations $U_1, \ldots, U_n$ on a probability space $(\Omega, \Sigma, \mu)$. Remember that $\Ms=\M^\ast _{\mathcal B_{\mathsf S}}$ where $\mathcal B _{\mathsf S}$ denotes the collection of open rectangles in $\R^n$ that contain the origin and have sides parallel to the coordinate axes. By the transference theorem of the preceding section, we realize that it suffices to prove an equivalent Solyanik estimate for the corresponding discrete strong maximal operator $\DSM$, acting on functions on $\mathbb{Z}^n$.  We will obtain such Solyanik estimates by taking advantage of known Solyanik estimates for the geometric strong maximal operator $\Mstr$ acting on functions on $\mathbb{R}^n$, given by Theorem \ref{strongsolyanik}.

To that end, let us recall here that $\DSM$ is given by
\[
\DSM f(m) \coloneqq  \sup_{0 \in R \in \mathbb{R}^n}\frac{1}{\#(R \cap \mathbb{Z}^n)}\sum_{ j = (j_1, \ldots, j_n)\in R \cap \mathbb{Z}^n}  |f(m+j)|,\quad m\in \Z^n,
\]
where the supremum is taken over all open rectangles in $\mathbb{R}^n$ containing the origin whose sides are parallel to the coordinate axes.

To each set $ \tilde{E}  \subset \mathbb{Z}^n$ we associate a set $ E \subset \mathbb{R}^n$ defined by
\[
\chi_{ E}(x_1, \ldots, x_n) \coloneqq \chi_{ \tilde{E} }(\lfloor x_1 \rfloor, \ldots, \lfloor x_n \rfloor),\quad  (x_1,\ldots,x_n)\in \R^n.
\]
Here, for $z\in \R$  we denote by $\lfloor z \rfloor$ the greatest integer which is less or equal than $z$. With this definition we have
\[
\# \tilde{E} = \sum_{(j_1,\ldots,j_n)\in \tilde{E}} \ABs{\prod_{k=1} ^n [j_k,j_k+1)}=| E|.
\]
Let $m=(m_1,\ldots,m_n)\in \Z^n$ and $R\in \mathcal B_{\mathsf S}$. For any set $ \tilde{E} \subseteq \Z^n$ we have that
\[
\begin{split}
\frac{1}{\#(R\cap\Z^n)}\sum_{j\in R} \chi_{ \tilde{E}}(m+j)& =\frac{1}{\#(R\cap\Z^n)}\sum_{j\in R} \int_{\prod_{k=1} ^n[m_k+j_k,m_k+j_k+1)} \chi_{E}(\lfloor u \rfloor )du
\\
& =  \frac{1}{| R'_m |}\int_{ R'_m } \chi_{E}(u)du
\end{split}
\]
where $ R'_m \subseteq \R^n$ is a rectangle in $\R^n$ whose sides are parallel to the axes with $R' _m\supseteq R_m := (m_1,m_1+1)\times \cdots\times (m_n,m_n+1) $. Note then $\inf_{ 	R_m} \Mstr\chi_{E} \geq\DSM \chi_E(m)$. Defining \mbox{$ \tilde{E}_\alpha\coloneqq \{m\in\Z^n:\DSM \chi_{ \tilde{E} }(m) > \alpha\}$} we thus have
\[
\begin{split}
\# \tilde{E}_\alpha &=\Abs{\bigcup_{m\in \tilde{E}_\alpha}  R_m}\leq |\{x\in\R^n:\Mstr\chi_{E}(x)>\alpha\}|
\\
&\leq \Cstr(\alpha)| E|=\Cstr(\alpha)\# \tilde E
\end{split}
\]
by the definition of sharp Tauberian constant $\Cstr(\alpha)$ and the fact that $\#\tilde{E}=|E|.$ Thus, recalling that $\tilde{\mathsf{C}}_{\mathsf S}(\alpha)$ denotes the sharp Tauberian constant with respect to $\alpha$ associated to $\tilde{\M}_{\mathsf S}$, we have proven that $\tilde{\mathsf{C}}_{\mathsf S}(\alpha) \leq \Cstr(\alpha)$ for all $\alpha\in(0,1)$. Now the transference result of Theorem~\ref{tt} together with the Solyanik estimate for the strong maximal function of Theorem~\ref{t1} imply that
\[
\Cs(\alpha)-1\leq \tilde{\mathsf{C}}_{S}(\alpha) - 1 \lesssim_n \big(\frac{1}{\alpha} - 1\big)^{\frac{1}{n}}
\]
for $\alpha$ sufficiently close to $1$, completing the proof of the theorem.
\end{proof}
%%%%%%%%%%%%%%%%%%%%%%%%%%%%%% PROOF PROOF PROOF

%%%%%%%%%%%%%%%%%%%%%%%%%%%%%% SECTION SECTION SECTION
\section{Solyanik estimates for the centered ergodic Hardy-Littlewood maximal operator}

In this section we give the proof of the Solyanik estimate for the centered ergodic Hardy-Littlewood maximal operator. The proof relies, again, on the transference principle of Theorem~\ref{tt}.

%%%%%%%%%%%%%%%%%%%%%%%%%%%%%% PROOF PROOF PROOF
\begin{proof}[Proof of Theorem \ref{t2}] Recall that $\MHLCD$ is the discrete centered Hardy-Littlewood maximal operator defined on $L^1(\mathbb{Z}^n)$ by
\[
\MHLCD f(m) \coloneqq \sup_{B\in\mathcal B_{\mathsf{HL,c}}}\frac{1}{\#(B\cap\Z^n)}\sum_{j=(j_1, \ldots, j_n)\in B\cap \mathbb{Z}^n}|f(m+j)|,\quad m \in \Z^n.
\]
We will show that $\MHLCD$ satisfies the Solyanik estimate
\[
\CHLCD(\alpha) - 1 \lesssim_n \frac{1}{\alpha} - 1
\]
for $\alpha$ sufficiently close to $1$. Recall here that $\CHLCD(\alpha) $ is the sharp Tauberian constant of $\MHLCD$ associated with $\alpha$ as defined in \S\ref{intro}.

Fix now $0 < \alpha < 1$, and let $\tilde E $ be a nonempty finite subset of $\mathbb{Z}^n$. Setting
$ \tilde E _\alpha \coloneqq\{m\in \Z^n: \MHLCD \chi_{\tilde E }(m)>\alpha\}$ it is easy to see that $\tilde E _\alpha$ is a finite set. Then, for each $m\in  \tilde  E _\alpha$ there exists a Euclidean ball $ B^m \in \R^n$ such that
\[
\frac{1}{\#(B^m \cap \Z^n)}\sum_{w \in  B^m\cap \mathbb{Z}^n }\chi_{\tilde E }(w)  > \alpha \quad\text{and}\quad \tilde E_\alpha\subseteq \bigcup_{m\in \tilde  E _\alpha} B^m.
\]
By the Besicovitch covering theorem, as for example in \cite{mattila}, there exists a subcollection $\{B_j\}_{j=1} ^N \subseteq \{B^m\}_{m\in \tilde E_\alpha}$ such that $\tilde E _\alpha \subseteq \cup_j   B_j \cap \Z^n $ and $\sum_j \chi_{B_j} \leq A_n$, where $A_n>0$ is a dimensional constant. We can now estimate
\[
\begin{split}
\#  \tilde E _\alpha & \leq  \# \tilde E +\# \bigcup_j (B_j\cap \Z^n )\setminus \tilde E \leq \# \tilde E  + \sum_j \# (B_j\cap \Z^n )\setminus   \tilde E
\\
&\leq \# \tilde E  + \frac{1-\alpha}{\alpha} \sum_j \# B_j \cap \tilde  E  \leq \#  \tilde E  +A_n \frac{1-\alpha}{\alpha} \#  \tilde E .
\end{split}
\]
This shows that $\CHLCD(\alpha)\leq 1+A_n\frac{1-\alpha}{\alpha} $. Now Theorem~\ref{tt} implies that
\[
\CHLCE(\alpha)-1 \leq  \CHLCD(\alpha) -1 \lesssim_n \frac{1-\alpha}{\alpha}
\]
as we wanted to show.
\end{proof}
%%%%%%%%%%%%%%%%%%%%%%%%%%%%%% PROOF PROOF PROOF

%%%%%%%%%%%%%%%%%%%%%%%%%%%%%% SECTION SECTION SECTION
\section{Solyanik estimates for the uncentered Hardy-Littlewood ergodic maximal operator}

In this section we show the Solyanik estimate for the uncentered ergodic Hardy-Littlewood maximal operator. The proof follows again the familiar pattern of proving a corresponding result for a suitable discrete geometric maximal operator and then using the transference principle of \S\ref{s.trans}.

%%%%%%%%%%%%%%%%%%%%%%%%%%%%%% PROOF PROOF PROOF
\begin{proof}[Proof of Theorem \ref{t3}]  Let us consider the discrete uncentered maximal operator $\MHLD$ defined on $L^1(\mathbb{Z}^n)$ by
\[
\MHLD f(m) = \sup_{B\in \mathcal B_{\mathsf{HL}}} \frac{1}{\#(B\cap\Z^n)} \sum_{j =(j_1, \ldots, j_n)\in B \cap \mathbb{Z}^n}  |f(m+j)|,\quad m\in \Z^n.
\]
We will show that $\MHLD $ satisfies the Solyanik estimate
\[
\mathsf \CHLD(\alpha) - 1 \lesssim_n \left(\frac{1}{\alpha} - 1\right)^{\frac{1}{n(n+1)}}
\]
for $\alpha$ sufficiently close to $1$, where $\CHLD(\alpha)$ is the sharp Tauberian constant of $\MHLD$ associated with $\alpha$, as defined in \S\ref{intro}.

Fix now $0<\alpha<1$, and let $\tilde{E}$ be a nonempty finite subset of $\mathbb{Z}^n$.  Set $ \tilde{E}_\alpha \coloneqq\{m\in \Z^n: \MHLD \chi_{\tilde{E}}(m)>\alpha\}$. We may assume without loss of generality that $\tilde E_\alpha\setminus \tilde E\neq \emptyset$.

Suppose  that $m\in \tilde{E}_\alpha\setminus \tilde{E} $. Then there exists a Euclidean ball $B_m\subset \R^n$ such that $m\in B_m$ and
\[
\frac{1}{\#(B_m\cap \Z^n)} \sum_{w\in B_m\cap \Z^n} \chi_{\tilde{E}}(w)>\alpha.
\]
Furthermore, since $m\in B_m \cap \Z^n \setminus \tilde{E}\neq \emptyset  $  we have the elementary estimate
\[
\alpha<\frac{\#(B_m\cap \Z^n)\cap \tilde{E}}{\#(B_m\cap \Z^n)}\leq \frac{\#(B_m\cap \Z^n)-1}{\#(B_m\cap \Z^n)}
\]
and thus $\#(B_m\cap \Z^n)>  (1-\alpha)^{-1}$. Letting $c_m$ denote the center of $B_m$ and $r_m$ denote the radius of $B_m$, elementary geometric considerations imply that
\[
\bigcup_{w \in B_m\cap\Z^n} (w+[-1,1)^n ) \subset B(c_m, r_m + \sqrt{n}),
\]
where we let $B(c,r)$ denote the open ball in $\mathbb{R}^n$ of radius $r$ centered at $c$.  Moreover
\[
B(c_m, r_m - \sqrt{n}) \subset \bigcup_{w \in B_m\cap \Z^n } (w+[0,1)^n  ).
\]
So
\[
C_n (r_m - \sqrt{n})^n \leq \#(B_m\cap \Z^n )\leq C_n(r_m + \sqrt{n})^n
\]
for some dimensional constant $C_n>0$.

As we have done previously, associate now to the discrete set $ \tilde{E} \subset\Z^n$ the set $E \subset\R^n$ defined by
\[
\chi_{E}(t_1, \ldots, t_n) \coloneqq \chi_{ \tilde{E} }(\lfloor t_1 \rfloor, \ldots, \lfloor t_n \rfloor),\quad (t_1,\ldots,t_n)\in\R^n.
\]
Observe that $$B(m, \sqrt{n}) \subset B(c_m, r_m + \sqrt{n})$$ so that for all $y \in B(m,\sqrt{n})$ we have
\[
\begin{split}
\MHL \chi_{E}(y) & \geq  \frac{\# ( \tilde{E}  \cap B_m)}{C_n(r_m + \sqrt{n})^n} > \alpha \frac{\#(B_m\cap \Z^n)}{C_n(r_m + \sqrt{n})^n}
\\
& > \alpha\frac{C_n(r_m - \sqrt{n})^n}{C_n(r_m + \sqrt{n})^n} = \alpha\left(\frac{r_m - \sqrt{n}}{r_m + \sqrt{n}}\right)^n .
\end{split}
\]
Note that  $C_n(r_m + \sqrt{n})^n \geq \#(B_m\cap \Z^n) >(1 - \alpha)^{-1}$ and thus $r_m\geq (C_n(1-\alpha))^{- \frac{1}{n}}-\sqrt n$. Thus the previous estimate implies that
\[
\MHL \chi_{E}(y) > \alpha\bigg(\frac{(C_n(1-\alpha))^{- \frac{1}{n}}-2\sqrt n}{(C_n(1-\alpha))^{- \frac{1}{n}} + \sqrt{n}}\bigg)^n\eqqcolon c(\alpha,n),\qquad  \forall y\in B(m,\sqrt n).
\]
We conclude that
\[
m\in \tilde{E}_\alpha\setminus \tilde{E} \Rightarrow \MHL\chi_{E}>c(\alpha,n)\quad\text{on}\quad (m_1,m_1+1)\times \cdots\times (m_n,m_n+1)\coloneqq Q_m.
\]

Suppose on the other hand that $m\in \tilde{E} \cap \tilde{E}_{\alpha}$. Then $Q_m\subseteq E$ and thus $\MHL \chi_E$ is identically $1$ on $Q_m$.

Combining the estimates and observations above we see that if $m \in \tilde{E}_{\alpha}$ we must have $\MHL \chi_{E}(x)>c(\alpha,n)$ on $Q_m$.  Hence
\[
\begin{split}
\#\tilde{E}_\alpha &=\ABs{\bigcup_{m\in \tilde{E}_\alpha} Q_m } \leq |\{x\in\R^n: \MHL \chi_{E}(x)>c(\alpha,n)\}|	
\\
& \leq \CHL(c(\alpha,n))| E|=\CHL(c(\alpha,n)) \#\tilde{E}.
\end{split}
\]
This proves the estimate
\[
\CHLD(\alpha)\leq \CHL(c(\alpha,n)).	
\]
It is obvious that $\lim_{\alpha\to 1^-}c(\alpha,n)=1$. Thus, if $\alpha$ is sufficiently close to $1$, depending only upon the dimension $n$, Theorem~\ref{solyanikthm} implies
\[
\CHLD(\alpha) -1\lesssim_n (1-c(\alpha,n))^\frac{1}{n+1}.
\]
By direct computation one may show that
\[
(1-c(\alpha,n))^\frac{1}{n+1}\lesssim (1-\alpha)^\frac{1}{n(n+1)}\quad\text{as}\quad  \alpha\to 1^-.
\]
Thus the previous estimate together with the transference principle of Theorem~\ref{tt} show that
\[
\CHLE(\alpha)-1\leq \CHLD(\alpha) -1 \lesssim_n (1-c(\alpha,n))^\frac{1}{n+1}\lesssim_n (1-\alpha)^\frac{1}{n(n+1)}
\]
for $\alpha$ sufficiently close to $1$, as desired.
\end{proof}

\section{Future directions for research involving Solyanik estimates in ergodic theory}

Our original foray into the topic of Solyanik estimates in ergodic theory has been promising, and we close here with three problems that we believe to be appropriate directions for future development in the subject.

\begin{problem} An intriguing question is to whether the exponent $\frac{1}{n(n+1)}$ occurring in the Solyanik estimate of Theorem \ref{t3} is sharp, in particular holding for all choices of commuting measure preserving transformations $U_1, \ldots, U_n$.  Indeed we do not know if the exponent $\frac{1}{n+1}$ is sharp for the Solyanik exponent associated to $\CHL(\alpha)$ given in Theorem \ref{solyanikthm}.  The problem for the optimal Solyanik exponent is discussed in detail in \cite{HP}, where evidence is given that suggests the optimal exponent might be as large as $\frac{1}{n}$ or possibly even $\frac{2}{n+1}$.
\end{problem}

\begin{problem} It is natural to ask, provided $\mathcal{B}$ is any sort of reasonable basis, whether or not ergodic Solyanik estimates must hold for $\CEB$.  In particular, if $\mathcal{B}$ is a basis consisting of convex subsets in $\mathbb{Z}^n$  such that $\CEB < \infty$ for every $0 < \alpha < 1$, must \[\lim_{\alpha \rightarrow 1^-}\CEB = 1\] hold?  It is highly unlikely that the convexity condition can be dispensed with; see \cite{bh} for comments regarding bases of nonconvex sets in $\mathbb{R}^n$ for which Solyanik estimates for the associated geometric maximal operators are known not to hold.
\end{problem}

\begin{problem} It is not hard to see that for certain choices of commuting measure preserving transformations $U_1,\ldots,U_n$ on a probability space $(\Omega, \Sigma, \mu)$ one might obtain especially good Solyanik estimates for $\CEB$. Indeed, consider for example the case $U_1=\cdots=U_n=\mathsf{Id}$ where $\mathsf{Id}$ is the identity operator. It is natural to consider collections of transformations $U_1, \ldots, U_n$  which yield the worst possible Solyanik exponent associated to a given basis.  We suspect that, in many cases,  the worst possible exponent may be obtained by requiring that $U_1, \ldots, U_n$ be \emph{non-periodic} in the sense of Katznelson and Weiss; see \cite{KW}.  It would be natural to first test this hypothesis in the special case that $\mathcal{B}$ is the relatively well-understood basis $\mathcal{B}_{\mathsf{HL}}$ or $\mathcal{B}_{\mathsf{S}}$, and very likely the techniques devised by Hagelstein and Stokolos in \cite{hs2012} on sharp transference estimates would be helpful here.
\end{problem}

\begin{problem}  The authors have recently shown in \cite{HP2014Holder} that Solyanik estimates may be used to establish \emph{smoothness} results for the functions $\CHL (\alpha)$ and $\Cstr (\alpha)$ on $(0,1)$.   In particular, they proved that both lie in the H\"older class $C^{\frac{1}{n}}(0,1)$.  It is natural to consider whether or not, in the ergodic setting, the Tauberian constants $\CHLE (\alpha)$ and $\Cs(\alpha)$ satisfy H\"older continuity estimates or are possibly even differentiable or smooth.
\end{problem}
%%%%%%%%%%%%%%%%%%%%%%%%%%%%%% SECTION SECTION SECTION
\begin{bibsection}
\begin{biblist}
	
\bib{bh}{article}{
   author={Beznosova, Oleksandra},
   author={Hagelstein, Paul Alton},
   title={Continuity of halo functions associated to homothecy invariant
   density bases},
   journal={Colloq. Math.},
   volume={134},
   date={2014},
   number={2},
   pages={235--243},
   issn={0010-1354},
   review={\MR{3194408}},
}

\bib{calderon1968}{article}{
Author = {Calder\'on, A. P.},
Title= {Ergodic theory and translation invariant operators},
journal={Proc. Nat. Acad. Sci. U.S.A.},
volume={59},
date={1968},
pages={349--353}
review={\MR{0227354 (37 \#2939)}},
}

\bib{grafakosmontsmith1997}{article}{
  author={Grafakos, L.},
  author={Montgomery-Smith, S.}
   title={Best constants for uncentred maximal functions},
   journal={Bull. London Math. Soc.},
   volume={29},
   date={1997},
   pages={60--64},
   review={\MR{1416408 (98b:42031)}}
   }

\bib{HP}{article}{
  author = {Hagelstein, Paul},
  author = {Parissis, Ioannis},
  title = {Solyanik Estimates in Harmonic Analysis},
conference={
      title={Special Functions, Partial Differential Equations, and Harmonic Analysis},
   },
  date = {2014},
   book={
      series={Springer Proc. Math. Stat.},
      volume={108},
      publisher={Springer, Heidelberg},
   },
  journal = {Springer Proceedings in Mathematics \& Statistics},
  pages = {87--103},
}

\bib{HP2014Holder}{article}{
  author= {Hagelstein, Paul},
  author= {Parissis, Ioannis},
  Eprint={1409.3811}
  title={Solyanik estimates and local H\"older continuity of halo functions of geometric maximal operators},
 Url = {http://arxiv.org/abs/1409.3811},
   }

\bib{hs2011}{article}{
  author={Hagelstein, P. A.},
   author={Stokolos, A.},
   title={Weak type inequalities for maximal operators associated to double ergodic sums},
   journal={New York J. Math},
   volume={17},
   date={2011},
   pages={233--250},
   review={\MR{2781915}}
   }

\bib{hs2012}{article}{
  author={Hagelstein, P. A.},
   author={Stokolos, A.},
   title={Transference of weak type bounds of multiparameter ergodic and geometric maximal operators},
   journal={Fund. Math.},
   volume={218},
   date={2012},
   pages={269--283},
   review={\MR{2982778}}
   }

\bib{KW}{article}{
author={Katznelson, Y.},
author={Weiss, B.}
title={Commuting measure-preserving transformations},
journal={Israel J. Math.},
volume={12}
date={1972}
pages={161--173}
review={\MR{MR0316680}}
}

\bib{mattila}{book}{
author={Mattila, P.},
title={Geometry of sets and measures in Euclidean spaces},
publisher={Cambridge University Press},
date={1995}
review={\MR{MR1333890 (96h:28006)}}
}

\bib{Petersen}{book}{
author={Petersen, K.},
title={Ergodic Theory},
publisher={Cambridge University Press},
date={1983},
review={\MR{0833286 (87i:28002)}}
}

\bib{Solyanik}{article}{
   author={Solyanik, A. A.},
   title={On halo functions for differentiation bases},
   language={Russian, with Russian summary},
   journal={Mat. Zametki},
   volume={54},
   date={1993},
   number={6},
   pages={82--89, 160},
   issn={0025-567X},
   translation={
      journal={Math. Notes},
      volume={54},
      date={1993},
      number={5-6},
      pages={1241--1245 (1994)},
      issn={0001-4346},
   },
   review={\MR{1268374 (95g:42033)}}
}
 \bib{Wiener39}{article}{
 author={Wiener, N.}
 title={The ergodic theorem},
 journal={Duke Math. J.},
 volume={5}
 pages={1--18},
 review={\MR{1546100}}
 }

 \bib{YosidaKakutani1939}{article}{
 author={Yosida, K.},
 author={Kakutani, S.},
 title={Birkhoff's ergodic theorem and the maximal ergodic theorem},
 journal={Proc. Imp. Acad., Tokyo},
 volume={15},
 pages={165--168},
 review={\MR{0000355}}
 }

\end{biblist}
\end{bibsection}
\end{document}